\DeclareMathOperator{\geo}{geo}
\numberwithin{equation}{section}
\newtheorem{theorem}{Theorem}[section]
\newtheorem{lemma}[theorem]{Lemma}
\newtheorem{proposition}[theorem]{Proposition}
\newtheorem{corollary}[theorem]{Corollary}
\newtheorem{example}[theorem]{Example}
\newcommand{\junk}[1]{}
\title{An analysis of a fair division protocol for drawing legislative districts}
\author{
Jessica De Silva\thanks{California State University, Stanislaus, Turlock, CA, USA jdesilva1@csustan.edu} 
\and
Brady Gales\thanks{Wake Forest University, Winston-Salem, NC, USA, galesb15@wfu.edu} 
\and
Bryson Kagy\thanks{Georgia Institute of Technology, Atlanta, GA, USA, brysonkagy@gatech.edu} 
\and
David Offner\thanks{Westminster College, New Wilmington, PA, USA, offnerde@westminster.edu}
}
\begin{document}
\maketitle
\begin{abstract}
    Landau, Reid, and Yershov [A Fair Division Solution to the Problem of Redistricting, \textit{Social Choice and Welfare}, 2008]
    propose a protocol for drawing legislative districts based on a two player fair division process, where each player is entitled to draw the districts for a portion of the state. We call this the \textit{LRY protocol}.  Landau and Su [Fair Division and Redistricting, arXiv:1402.0862, 2014] propose a measure of the fairness of a state's districts called the \textit{geometric target}. In this paper we prove that the number of districts a party can win under the LRY protocol can be at most two fewer than their geometric target, assuming no geometric constraints on the districts, 
    and provide examples to prove this bound is tight. We also show that if the LRY protocol is applied on a state with geometric constraints, the result can be arbitrarily far from the geometric target. 
    
\end{abstract}
\section{Introduction}

\textit{Gerrymandering} is the act of drawing legislative districts to favor one group over another. The focus of this paper is one of the most prevalent forms of gerrymandering, \textit{Partisan Gerrymandering}, where a political party draws districts to favor their party over opposing parties. Gerrymandering has recently been the focus of a number of high-profile court cases, and has drawn the interest of many mathematicians, some of whom have provided expertise in court. For example, the work of Herschlag, Ravier, and Mattingly in \cite{2017arXiv170901596H} was used in a recent United States supreme court case, while Pegden \cite{PegdenSC} testified in front of the Pennsylvania supreme court about the Pennsylvania congressional map.


Mathematicians have attempted to create metrics that detect intentional gerrymandering. 
In \cite{Dunchinmetric}, Duchin discusses some of the approaches used and challenges faced in developing a metric for detection of gerrymandering. In \cite{diffmetrics}, Warrington analyzes and compares numerous metrics that have been created for detecting gerrymandering, such as the Efficiency Gap.  In this paper we consider a measure of fairness called the \textit{geometric target}, proposed by Landau and Su in \cite{LandauSuProtocol}.  The geometric target is defined as the average of the best and worst cases for a party's measure of success.  We discuss the geometric target in more detail in Section \ref{geosec}.

\textit{Fair division} is the question of how to divide an object or set of objects among parties so that all parties receive a portion that is considered fair by their own evaluation. An example of a fair division problem is the classic \textit{I cut you choose} protocol for two-person cake cutting, which is envy-free in that each person gets a piece that they value as at least half the total of the cake. 
Finding a fair protocol for redistricting of states can be thought of as a \textit{fair division} problem. 
For example, Pegden, Procaccia, and Yu in \cite{Partisandistrictingprotocol} analyze an \textit{I cut you freeze} protocol for districting and provide guarantees for number of districts with majority support for a given party and the ability to pack a given sub-population into districts.

The fair division protocol we analyze in this paper, which we refer to as the \textit{LRY protocol}, is a two-player discrete fair-division procedure for districting described by Landau, Reid, and Yershov in \cite{LRYprotocol}. In the LRY protocol the two parties are presented with a sequence of splits of a given state, and asked to submit preferences for which side of the split they would prefer to draw the districts, with the other party getting to draw districts on the other side. In the end, a split is chosen and one party draws the districts on one side, and the other party draws the districts on the other.  
In Section 2 we fully explain this protocol using our notation. In Theorem~\ref{mainthm}, we show that if there are no geometric constraints on how the district is drawn, the number of districts won by a party under the LRY protocol can differ from that party's geometric target by at most 2 and from a party's $k$-split geometric target (a variation of the geometric target which is meaningful for the LRY protocol) by at most $\frac{3}{2}$.  

In Section 3 we consider the case where there are geometric constraints on how districts may be drawn. We define some geometric constraints corresponding to contiguity and compactness. In this case, we show in Example~\ref{geodelta} that it is possible to create a situation where the LRY protocol can return a result for a party where the number of districts they win is arbitrarily far from their geometric target.

\section{The LRY Protocol with no geometric constraints}

The LRY protocol was described by Landau, Reid, and Yershov in \cite{LRYprotocol}.  Here we summarize the protocol in our notation. 

\subsection{Notation for parties, support and splits}
We suppose there are two parties, denoted $A$ and $B$. We will use $P$ to denote a generic party, $P\in \{A,B\}$ and $\overline{P}$ to denote the party opposing $P$.  The goal of the protocol is to draw districts for a state, and we use $n \in \mathbb{N}$ to represent the number of districts to be drawn in the state, where each district contains the same number of people.  For a party $P$, let $x_P$ denote the total support of player $P$ in the state, and we assume $x_A+x_B=n$, so since there are $n$ districts, the support of the parties in each district is normalized to sum to 1.  

For $0 \le k \le n$ define a \textit{$k$-split} to be a division of the state so that  on one side of the split (by convention, we call this the left side) the two parties' support sums to $k$, i.e. there are $k$ districts' worth of population on the left side of the $k$-split, and $n-k$ districts' worth on the right side. For a $k$-split, let $L_k$ denote the area on the left of the split, and $R_k$ denote the area on the right. We denote by $S$ a side $S \in \{L,R\}$. Given a $k$-split, Let $S_k$ denote the area on the specified side of the $k$-split, and $\overline{S_k}$ denote the area on the other side.

Call a sequence of $k$-splits, $0 \le k \le n$ \textit{nested} if for every $k \ge 1$, $L_{k-1} \subseteq L_k$. For any nested sequence of $k$-splits, denote by $x_P(S_k)$ the total support for party $P$ in $S_k$, and let $x_P(k)= x_P(L_k) - x_P(L_{k-1})$, i.e. $x_P(k)$ is the support for party $P$ between the $(k-1)$-split and the $k$-split. Note that for all $k$, $x_A(k)+x_B(k)=1.$ Refer to Figure 4 in \cite{LandauSuProtocol} for an example of a state with nested  1-, 2-, 3-, and 4-splits.

\junk{
\begin{figure}[H]
    \centering
    \begin{tikzpicture}[scale=1.2]
 \draw[thick] (0,0) rectangle (7.5,3.75);
\draw[thick]node at (0,4){$0$-split}; 
\draw[thick]node at (2.25,4){$3$-split}; 
\draw[thick]node at (3.75,4){$5$-split};
\draw[thick]node at (5.25,4){$7$-split}; 
\draw[thick]node at (7.5,4){$10$-split};
\draw[dotted] (0,3.75)--(0,0);
\draw[dotted] (0.75 ,3.75)--(0.75 ,0);
\draw[dotted] (1.50 ,3.75)--(1.50 ,0);
\draw[dotted] (2.25 ,3.75)--(2.25 ,0);
\draw[dotted] (3 ,3.75)--(3 ,0);
\draw[dotted] (3.75 ,3.75)--(3.75 ,0);
\draw[dotted] (4.50 ,3.75)--(4.50 ,0);
\draw[dotted] (5.25 ,3.75)--(5.25 ,0);
\draw[dotted] (6 ,3.75)--(6 ,0);
\draw[dotted] (6.75 ,3.75)--( 6.7,0);
\draw[dotted] (7.5 ,3.75)--(7.5 ,0);
\end{tikzpicture} 
    \caption{An example state with 10 districts. Shown in dotted lines are nested 0, 3, 5, 7, and 10-splits.
    NOTE: I think this figure is not helpful.  Maybe refer to figures in LRY paper?}
    \label{fig:my_label}
\end{figure}
}

We assume that whichever candidate has more support in a district will win the district, and we adopt the convention that for all $k$, $x_P(S_k)$ is not an integer multiple of $.5$.  Therefore, we assume a party can always win a district with a strict majority, and there is no need to consider tied districts.

\subsection{The LRY Protocol}\label{LRYsubsec}
In the LRY protocol, an administrator (someone not affiliated with either party) presents a sequence of nested $k$-splits.

For all $k$, each party indicates which of the following options they prefer:
 \begin{enumerate}
 \item[]  \textbf{Option 1:} Party $A$ districts $L_k$ and Party $B$ districts $R_k$
\item[]  \textbf{Option 2:} Party $B$ districts $L_k$ and Party $A$ districts $R_k$
  \end{enumerate}
A party may also indicate that they are indifferent to the two options. 

The outcome of the protocol is as follows:
 \begin{enumerate}
 \item[]  \textbf{Outcome 1:} If there exists some $k$ such that Parties $A$ and $B$ both prefer the same option, then a map is created using that assignment of areas to district.
\item[]  \textbf{Outcome 2:} Else if there is a $k$ such that one party is indifferent but the other is not, the preferences of the non-indifferent party are chosen.
 \item[]  \textbf{Outcome 3:}  Else if there exists a $k$ such that both parties are indifferent then one of Option $1$ and Option $2$ is randomly chosen for that $k$.
\item[]  \textbf{Outcome 4 (Coin flip scenario):} Else it must be the case that there exists a $k$ such that for the $(k-1)$-split Party $A$ prefers Option $2$ and Party $B$ prefers Option $1$ but for the $k$-split they switch their preferences (Note that both parties prefer to district the right side in a $0$-split and the left side in an $n$-split). We call this scenario the \textit{coin flip scenario}. 
In this case, the protocol randomly returns one of the following four options, which we call the \textit{coin flip options}: 
\begin{itemize}
\item Option 1 for the $(k-1)$-split
\item Option 2 for the $(k-1)$-split
\item Option 1 for the $k$-split
\item Option 2 for the $k$-split.
\end{itemize}
  \end{enumerate}

In this section, we assume there are no geometric constraints on district lines, so when districting a side, a party is free to distribute their support among the districts in any way they like. The following example (illustrated in Figure~\ref{coinflipfig}) shows a situation where a coin flip scenario might arise.

\begin{example}\label{2gap} 
Suppose there is a state with 10 districts, and the following levels of support for the 5- and 6- splits. Let 
\begin{itemize}
    \item $x_A(L_5) = 1.9$, $x_B(L_5) = 3.1$
    \item $x_A(6) = 0.9$, $x_B(6) = 0.1$
    \item $x_A(R_6) = 1.4$, $x_B(R_6) = 2.6$.
\end{itemize} 
\end{example}
We suppose each party's goal is to maximize the number of districts they win. We examine the possible options for the 5- and 6- splits in Example~\ref{2gap}. 
\begin{itemize}
    \item $5$-split, Option 1: With support 1.9, $A$ can claim a majority in 3 districts on the left, leaving 2 for $B$, while with support 2.7, $B$ can win all 5 districts on the right.  So $A$ wins 3 and $B$ wins 7.
    \item $5$-split, Option 2: With support 2.3, $A$ can claim a majority in 4 districts on the right, leaving 1 for $B$, while with support 3.1, $B$ can win all 5 districts on the left.  So $A$ wins 4 and $B$ wins 6.
    \item $6$-split, Option 1: With support 2.8, $A$ can claim a majority in 5 districts on the left, leaving 1 for $B$, while with support 2.6, $B$ can win all 4 districts on the right.  So $A$ wins 5 and $B$ wins 5.
    \item $6$-split, Option 2: With support 1.4, $A$ can claim a majority in 2 districts on the right, leaving 2 for $B$, while with support 3.2, $B$ can win all 6 districts on the left.  So $A$ wins 2 and $B$ wins 8.
\end{itemize}
Thus $A$ prefers Option 2 and $B$ prefers Option 1 for the 5-split and they switch preferences for the 6-split.

\begin{figure}
    \centering
   \begin{tikzpicture}[scale=1.4]
 \draw[thick] (0,0) rectangle (11,2)
 node at (2,1.4){$\textcolor{red} {x_A(L_5)=1.9}$}
 node at (2,.6) {$\textcolor{blue}{x_B(L_5)=3.1}$};
 \draw[ ultra thick] (4.5,2) -- (4.5,0)
 node at (5.5,1.4){$\textcolor{red} {x_A(6)=.9}$}
 node at (5.5,.6) {$\textcolor{blue}{x_B(6)=.1}$};
  \draw[ ultra thick] (6.5,2) -- (6.5,0)
 node at (9,1.4){$\textcolor{red} {x_A(R_6)=1.4}$}
 node at (9,.6) {$\textcolor{blue}{x_B(R_6)=2.6}$} 
node[above] at (4.5,2){$5$-split}
node[above] at (6.5,2){$6$-split}
node[above] at (2,2){$L_5$}
node[above] at (9,2){$R_6$};
\end{tikzpicture}
    \caption{An example where the coin flip scenario applies.}\label{coinflipfig}
\end{figure}

A \textit{voting model} is a prediction of how the people in the state will vote, and of course in the real world, each party may have their own private voting model. It is interesting to note that if both parties share the same voting model, then in the LRY protocol parties with the goal of maximizing districts won will always choose different options or be indifferent to the options for a given $k$-split.  It is likely in the real world that two parties would not have the same voting model or have different goals (e.g. protecting incumbents), so in that case it is possible the LRY protocol could return Outcomes 1 or 2.

\subsection{Optimal Strategies}
Throughout the paper, we assume that the goal of each party is to maximize the number of districts they win. In this section, we assume there are no geometric constraints on how parties may draw district lines other than those imposed by the $k$-splits in the protocol. Given these assumptions, we now describe optimal strategies for each player, and state precisely how many districts they can win given their support.

Define $P_i(S_k,P_j)$ to be the number of districts won by party $P_i$ on side $S_k$ when party $P_j$ draws districts on $S_k$.  For example $A(L_7,B)$ is the number of districts that party $A$ will win among the 7 districts in $L_7$ when $B$ draws the districts on that side. Let $P(S_k)$ be the total number of wins for $P$ when they district $S_k$ and $\overline{P}$ districts $\overline{S}_k$. That is, \[P(S_k)=P(S_k,P)+P(\overline{S}_{k}, \overline{P}).\] 
Since the number of districts in the state is the sum of the districts won  by both parties, 
\[P(S_k) +\overline{P}(\overline{S}_{k})=n.\] 
Let $|S_k|$ be the number of districts in $S_k$, i.e. $|L_k| = k$ and $|R_k| = n-k$.

Proposition~\ref{optimal1} describes the number of districts a party can win when they draw district lines on one side of a $k$-split, and Corollary~\ref{optimal2} describes how many districts $P$ will win if their opponent draws the districts on one side of a $k$-split. Informally, if a player has a majority on the side where they are drawing districts, their best strategy is to divide their support evenly in each district, thus having a majority in each one.  If they have a minority, they should win as many districts as possible with just over .5 of their support in each district.  In all propositions, we state the result in terms of a party $P$, but to make the proofs more readable, since the protocol is symmetric, we will frequently assume without loss of generality that $P=A$ in the proofs.

\begin{proposition} \label{optimal1}
   $P(S_k,P)=\min\{\lfloor2x_P(S_k)\rfloor,\ |S_k|\}$.
\end{proposition}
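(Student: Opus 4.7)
The plan is to prove the equality by establishing two inequalities.

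For the upper bound $P(S_k, P) \le \min\{\lfloor 2x_P(S_k)\rfloor, |S_k|\}$: the bound by $|S_k|$ is immediate, since $P$ cannot win more districts than exist on the side. For the bound by $\lfloor 2x_P(S_k)\rfloor$, suppose $P$ wins $m$ districts. Each winning district contains strictly more than $1/2$ of $P$-support, so summing over the winning districts gives $x_P(S_k) > m/2$, i.e.\ $m < 2x_P(S_k)$. By the paper's standing convention that $x_P(S_k)$ is never an integer multiple of $0.5$, $2x_P(S_k)$ is not an integer, so this strict inequality together with integrality of $m$ yields $m \le \lfloor 2x_P(S_k)\rfloor$.

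For the matching lower bound I would exhibit an explicit districting strategy for $P$ realizing $\min\{\lfloor 2x_P(S_k)\rfloor, |S_k|\}$ wins, splitting into two cases. In the majority case $\lfloor 2x_P(S_k)\rfloor \ge |S_k|$, the non-half-integer convention upgrades this to $x_P(S_k) > |S_k|/2$, so distributing $P$-support uniformly across the $|S_k|$ districts gives each district $P$-support $x_P(S_k)/|S_k| > 1/2$, and $P$ wins every district. In the minority case $\lfloor 2x_P(S_k)\rfloor < |S_k|$, let $m = \lfloor 2x_P(S_k)\rfloor$, choose $\epsilon > 0$ small, and build $m$ ``packed'' districts, each containing $0.5 + \epsilon$ units of $P$-support and $0.5 - \epsilon$ units of $\overline{P}$-support, then distribute the remaining population among the other $|S_k| - m$ districts arbitrarily. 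This delivers at least $m$ wins to $P$.

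The only step requiring real care is verifying feasibility of the minority-case construction. The inequalities $x_P(S_k) > m/2$ and $x_P(S_k) < (m+1)/2 \le |S_k|/2$ (the latter using $m + 1 \le |S_k|$) guarantee both that a valid $\epsilon$ exists, namely any $\epsilon \in (0, (x_P(S_k) - m/2)/m)$, and that there is enough $\overline{P}$-support to fill the packed districts, since $|S_k| - x_P(S_k) > |S_k|/2 > m/2 > m(1/2 - \epsilon)$. I do not expect a substantial obstacle: the non-half-integer convention on $x_P(S_k)$ is precisely what prevents the floor function from producing boundary pathologies, and the remainder is bookkeeping.
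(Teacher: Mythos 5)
Your proposal is correct and follows essentially the same route as the paper: a case split on whether $P$ has a majority on $S_k$, with uniform spreading of support in the majority case and packing districts at just over $0.5$ in the minority case. You are somewhat more explicit than the paper about the upper bound (summing support over winning districts) and about feasibility of the packing construction, but these are refinements of the same argument rather than a different approach.
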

\begin{proof}
There are two cases, depending whether party $P$ has a majority on side $S_k$ or not.

Case 1: Suppose $x_P(S_k)>\frac{|S_k|}{2}$, and thus $|S_k|=\min\{\lfloor2x_P(S_k)\rfloor,\ |S_k|\}$. In this case, player $P$ can create $|S_k|$ districts where in each district their support is $\frac{x_P(S_k)}{|S_k|}>\frac{1}{2}$.  Thus they can win $|S_k|$ districts, and this is the best possible.

Case 2: Suppose $x_P(S_k)<\frac{|S_k|}{2}$, and thus $\lfloor2x_P(S_k)\rfloor = \min\{\lfloor2x_P(S_k)\rfloor,\ |S_k|\}$.
For any district $i$ to be a victory for $P$ the support for $P$ in the district must be at least 1/2. Thus player $P$ can create $\lfloor2x_P(S_k)\rfloor$ districts with support just over $0.5$, and can win these districts, but their remaining support is less than $.5$, so they cannot win any more.
\end{proof}

Before proving Corollary~\ref{optimal2}, we need to prove a property of floors and ceilings. 

\begin{proposition} \label{ceil floor}Assume $x,y \in \mathbb{R}$ and  $x+y=k$ with $k\in \mathbb{N}.$ Then \[\min\{\lfloor 2x \rfloor,k\}+\max\{\lceil y-x\rceil,0\}=k.\]
\begin{proof}
Since $y=k-x$,
 \begin{equation}
   \min\{\lfloor 2x \rfloor,k\}+\max\{\lceil y-x\rceil,0\}= \min\{\lfloor 2x \rfloor,k\}+\max\{\lceil k-2x\rceil,0\}.
\end{equation}
Case 1: Assume $x> \frac{k}{2}$. Then \[\min\{\lfloor 2x \rfloor,k\}+\max\{\lceil k-2x\rceil,0\}=k+0=k.\]
Case 2: Assume $x\leq\frac{k}{2}$. Then 
\[\min\{\lfloor 2x \rfloor,k\}+\max\{\lceil k-2x\rceil,0\}=\lfloor2x\rfloor+\lceil k-2x\rceil=\lfloor2x\rfloor+k+\lceil-2x\rceil.\] 
Since $\lceil -z\rceil=-\lfloor z\rfloor$ for all $z \in \mathbb{R}$,   this quantity is equal to
$\lfloor 2x\rfloor+k-\lfloor 2x \rfloor=k$.
\end{proof}

\end{proposition}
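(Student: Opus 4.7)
The plan is to first eliminate $y$ from the expression by substituting $y = k - x$, which converts $\lceil y - x \rceil$ into $\lceil k - 2x \rceil$. This reduces the claim to showing
\[
\min\{\lfloor 2x \rfloor,\, k\} + \max\{\lceil k - 2x \rceil,\, 0\} = k
\]
for any real $x$ and integer $k$. After this reduction, the natural approach is a case split on the sign of $k - 2x$, i.e. on whether $x > k/2$ or $x \le k/2$, since that is exactly what resolves which branch each of the $\min$ and $\max$ selects.

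First I would handle the case $x > k/2$. Because $k$ is an integer and $2x > k$, the floor $\lfloor 2x \rfloor$ is at least $k$, so the $\min$ collapses to $k$. Simultaneously $k - 2x < 0$ forces $\lceil k - 2x \rceil \le 0$, so the $\max$ collapses to $0$. The sum is $k$, as desired. This case is essentially a sign check.

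Next, for $x \le k/2$, the inequality $2x \le k$ gives $\lfloor 2x \rfloor \le k$ so the $\min$ becomes $\lfloor 2x \rfloor$, and $k - 2x \ge 0$ makes $\lceil k - 2x \rceil \ge 0$ so the $\max$ becomes $\lceil k - 2x \rceil$. The goal is then $\lfloor 2x \rfloor + \lceil k - 2x \rceil = k$. Here I would use the fact that $k$ is an integer to pull it outside the ceiling, writing $\lceil k - 2x \rceil = k + \lceil -2x \rceil$, and then apply the standard identity $\lceil -z \rceil = -\lfloor z \rfloor$ with $z = 2x$. The two floor terms cancel, leaving $k$.

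The only mild subtlety, and the one place where I would be careful, is the boundary $x = k/2$: in that case $2x = k$ is an integer so $\lfloor 2x \rfloor = k$ and $\lceil k - 2x \rceil = 0$, which is consistent with Case 2 as stated (and also consistent with extending Case 1 to $x \ge k/2$). Beyond that, the proof is a routine manipulation of floor/ceiling identities, so I do not expect any real obstacle once the substitution $y = k - x$ has been made and the case split set up.
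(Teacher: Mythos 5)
Your proposal is correct and follows essentially the same route as the paper's proof: substitute $y = k - x$, split on $x > k/2$ versus $x \le k/2$, and in the second case use $\lceil -z \rceil = -\lfloor z \rfloor$ to cancel the floor terms. Your extra remark about the boundary $x = k/2$ is a nice touch but does not change the argument.
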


\begin{corollary} \label{optimal2}
    $P(S_k,\overline{P}) = \max\left\{\lceil x_P(S_k) - x_{\overline{P}}(S_k) \rceil,0 \right\}$.
\end{corollary}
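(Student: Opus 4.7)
The plan is to derive Corollary~\ref{optimal2} directly from Proposition~\ref{optimal1} together with the arithmetic identity established in Proposition~\ref{ceil floor}. The key observation is that when $\overline{P}$ draws the districts on side $S_k$, every one of the $|S_k|$ districts is won by exactly one of the two parties (the no-ties assumption rules out ambiguity), so
\[
P(S_k,\overline{P}) \;=\; |S_k| \;-\; \overline{P}(S_k,\overline{P}).
\]

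First, I would apply Proposition~\ref{optimal1} with the roles of the parties swapped to get $\overline{P}(S_k,\overline{P}) = \min\{\lfloor 2x_{\overline{P}}(S_k)\rfloor,\ |S_k|\}$. Substituting this into the displayed equality above yields
\[
P(S_k,\overline{P}) \;=\; |S_k| \;-\; \min\{\lfloor 2x_{\overline{P}}(S_k)\rfloor,\ |S_k|\}.
\]

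Second, I would invoke Proposition~\ref{ceil floor} with the assignment $x = x_{\overline{P}}(S_k)$, $y = x_P(S_k)$, and $k = |S_k|$. The hypothesis $x+y=k$ of Proposition~\ref{ceil floor} is satisfied because the per-district support sums to $1$ and there are $|S_k|$ districts on side $S_k$, so $x_P(S_k) + x_{\overline{P}}(S_k) = |S_k|$. Proposition~\ref{ceil floor} then gives
\[
\min\{\lfloor 2x_{\overline{P}}(S_k)\rfloor,\ |S_k|\} \;+\; \max\{\lceil x_P(S_k) - x_{\overline{P}}(S_k)\rceil,\ 0\} \;=\; |S_k|,
\]
and rearranging yields exactly the claimed formula $P(S_k,\overline{P}) = \max\{\lceil x_P(S_k) - x_{\overline{P}}(S_k)\rceil,\ 0\}$.

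There is no real obstacle here: the corollary is essentially a bookkeeping consequence of the earlier two results, and the only subtle point is to verify that the hypothesis of Proposition~\ref{ceil floor} applies (i.e., that the two parties' supports on $S_k$ sum to the integer $|S_k|$), which follows from the normalization set up at the beginning of Section~2. If I wanted to avoid citing Proposition~\ref{ceil floor}, I could instead give a direct combinatorial argument — $\overline{P}$'s optimal strategy from Proposition~\ref{optimal1} is to either win every district (when $\overline{P}$ has a majority, so $P$ wins $0 = \max\{\lceil x_P - x_{\overline{P}}\rceil,0\}$ districts) or to pack just over $\tfrac{1}{2}$ support into $\lfloor 2x_{\overline{P}}(S_k)\rfloor$ districts and cede the rest to $P$ — but routing the proof through Proposition~\ref{ceil floor} is cleaner.
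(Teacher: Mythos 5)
Your proposal is correct and follows essentially the same route as the paper: both decompose $|S_k| = P(S_k,\overline{P}) + \overline{P}(S_k,\overline{P})$, apply Proposition~\ref{optimal1} to the party drawing the districts, and then invoke the floor/ceiling identity of Proposition~\ref{ceil floor} with the same substitution to rearrange into the claimed formula. No differences worth noting.
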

\begin{proof}
Without loss of generality, assume $P=A$. Since $x_A(S_k) + x_{B}(S_k)=|S_k|$ and $|S_k|\in \mathbb{N}$, by Lemma~\ref{ceil floor} it follows that  $$ \min\{\lfloor2x_{B}(S_k)\rfloor,\ |S_k|\} +\max\left\{\lceil x_A(S_k) - x_{B}(S_k) \rceil,0 \right\}=|S_k|.$$
 Additionally, $$|S_k|= B(S_k,B)+A(S_k,B).$$ 
 By Proposition~\ref{optimal1}, $B(S_k,B) = \min\{\lfloor2x_{B}(S_k)\rfloor,\ |S_k|\}$, so
 $$A(S_k,B) = \max\left\{\lceil x_A(S_k) - x_{B}(S_k) \rceil,0 \right\}.$$
\end{proof}

\subsection{The Geometric Target} \label{geosec}
The geometric target provides a measure of fairness for a districting protocol \cite{LandauSuProtocol}. The geometric target for a party $P$, denoted $\geo(P)$ is the average of a party's best case scenario and worst case scenario. Since we assume each party's goal is to maximize the number of districts won, this corresponds to the average of the number of districts a party wins when they district the whole state and the number of districts a party wins when the opposing party districts the whole state. 

When discussing the LRY protocol, it is also useful to define for each $k$ the \textit{$k$-split geometric target} for a party $P$, denoted $\geo_k(P)$.  This quantity is the average of a party's best case scenario and worst case scenario, with the restriction that every district must lie on one side or the other of the $k$-split.


In Propositions~\ref{geotarg} and \ref{k-split} we give formulas for the geometric target and $k$-split geometric targets in terms of a party's support.  Then in Proposition~\ref{geogeok} we show that the geometric target cannot differ from any $k$-split geometric target by more than 1/2 for any value of $k$. To do this, we need a result about differences between floors and ceilings.
\begin{proposition} \label{less than 1}
Let $r$ and $s$ be positive real numbers, and $t=r+s$. Then \begin{enumerate}[label=\roman*.]
    \item\label{i<1} $|\lceil t\rceil-(\lceil r\rceil+\lceil s\rceil)|\leq 1$ ,
    \item\label{ii<1} $|\lceil t\rceil - (\lceil r \rceil+\lfloor s\rfloor)|\leq 1$,
    \item\label{iii<1} $|\lfloor t\rfloor - (\lceil r \rceil+\lfloor s\rfloor)|\leq 1$,
    \item\label{iv<1} $|\lfloor t\rfloor - (\lfloor r \rfloor+\lfloor s\rfloor)|\leq 1.$
\end{enumerate}
\end{proposition}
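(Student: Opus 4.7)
The plan is to prove all four inequalities directly from the elementary bounds that hold for every real number $x$: namely $x-1<\lfloor x\rfloor\le x$ and $x\le \lceil x\rceil <x+1$. Applying these separately to $r$ and $s$ and summing will in each case pin $t=r+s$ inside an open interval of length at most $2$, from which the floor or ceiling of $t$ is forced to take one of at most two consecutive integer values centered on the quantity we are comparing it to.

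For (i), summing $r\le \lceil r\rceil$ with $s\le \lceil s\rceil$ gives $t\le \lceil r\rceil+\lceil s\rceil$, so $\lceil t\rceil \le \lceil r\rceil+\lceil s\rceil$; on the other hand $r>\lceil r\rceil-1$ and $s>\lceil s\rceil-1$ yield $t>\lceil r\rceil+\lceil s\rceil-2$, hence $\lceil t\rceil\ge \lceil r\rceil+\lceil s\rceil-1$. Symmetrically, (iv) uses $\lfloor r\rfloor\le r<\lfloor r\rfloor+1$ and the analogous bounds for $s$ to show $\lfloor t\rfloor-(\lfloor r\rfloor+\lfloor s\rfloor)\in\{0,1\}$.

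For (ii) and (iii), I would combine the mixed bounds
\[
\lceil r\rceil-1<r\le \lceil r\rceil, \qquad \lfloor s\rfloor\le s<\lfloor s\rfloor+1
\]
to obtain the strict two-sided estimate
\[
\lceil r\rceil+\lfloor s\rfloor-1<t<\lceil r\rceil+\lfloor s\rfloor+1.
\]
Because the endpoints are integers, the strict lower bound forces $\lceil t\rceil\ge \lceil r\rceil+\lfloor s\rfloor$ and $\lfloor t\rfloor\ge \lceil r\rceil+\lfloor s\rfloor-1$, while the strict upper bound forces $\lceil t\rceil\le \lceil r\rceil+\lfloor s\rfloor+1$ and $\lfloor t\rfloor\le \lceil r\rceil+\lfloor s\rfloor$. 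Each of these four one-sided estimates is exactly the corresponding direction in (ii) or (iii).

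There is no real obstacle here; the one thing to watch is the strictness of $\lceil r\rceil<r+1$, which holds for all real $r$ (including integers) and is what guarantees that when $t$ lies just above an integer $m$, the ceiling and floor of $t$ actually exceed $m-1$ and $m-1$ respectively. Once the interval estimates above are written down, the four claimed inequalities read off immediately.
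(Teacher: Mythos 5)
Your proposal is correct and rests on essentially the same elementary floor/ceiling estimates as the paper's proof. The only difference is one of packaging: you trap the real number $t$ in an open interval with integer endpoints and then read off $\lfloor t\rfloor$ and $\lceil t\rceil$, whereas the paper directly sandwiches $\lceil t\rceil$ (respectively $\lfloor t\rfloor$) between the two integer expressions $\lceil r\rceil+\lfloor s\rfloor$ and $\lceil r\rceil+\lceil s\rceil$ (respectively $\lfloor r\rfloor+\lfloor s\rfloor$ and $\lceil r\rceil+\lfloor s\rfloor$), which differ by at most one.
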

\begin{proof} 
Parts~\ref{i<1} and \ref{ii<1} follow from the observations that 
\begin{align}
    \lceil r\rceil+\lceil s\rceil & \ge \lceil r+s \rceil,\\
    \lceil r+s \rceil &\ge \lceil r\rceil+\lfloor s\rfloor, \text{ and}\\
    (\lceil r\rceil+\lceil s\rceil) - (\lceil r\rceil+\lfloor s\rfloor)|&\le 1.
\end{align}

Similarly, Parts~\ref{iii<1} and \ref{iv<1} follow from the observations that 
\begin{align}
    \lfloor r\rfloor+\lfloor s\rfloor & \le \lfloor r+s \rfloor,\\
    \lfloor r+s \rfloor &\le \lceil r\rceil+\lfloor s\rfloor, \text{ and}\\
    (\lceil r\rceil+\lfloor s\rfloor) - (\lfloor r\rfloor +\lfloor s\rfloor)|&\le 1.
\end{align}
\end{proof}

\begin{proposition} \label{geotarg}
If $x_P > n/2$, then $\geo(P) = \frac{\lceil 2(x_P)\rceil}{2}$.  If $x_P < n/2$, then $\geo(P) = \frac{\lfloor2x_P\rfloor}{2}$.

\end{proposition}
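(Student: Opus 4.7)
The plan is to apply Proposition~\ref{optimal1} and Corollary~\ref{optimal2} with $S_k$ taken to be the entire state (so $|S_k|=n$ and $x_P(S_k)=x_P$, $x_{\overline{P}}(S_k)=n-x_P$). This immediately expresses the best and worst case outcomes as
\[
P(\text{state},P)=\min\{\lfloor 2x_P\rfloor,\, n\}, \qquad P(\text{state},\overline{P})=\max\{\lceil 2x_P-n\rceil,\, 0\},
\]
so by definition $\geo(P)$ is the average of these two quantities. From here the proposition reduces to two elementary computations.

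For Case 1, assume $x_P>n/2$. Then $2x_P>n$, and since $n\in\mathbb{N}$ this gives $\lfloor 2x_P\rfloor\ge n$, so $\min\{\lfloor 2x_P\rfloor,n\}=n$. On the other hand $2x_P-n>0$, and because $n$ is an integer $\lceil 2x_P-n\rceil=\lceil 2x_P\rceil-n$, which is positive, so $\max\{\lceil 2x_P-n\rceil,0\}=\lceil 2x_P\rceil-n$. Averaging gives $\geo(P)=\tfrac{1}{2}(n+\lceil 2x_P\rceil-n)=\tfrac{\lceil 2x_P\rceil}{2}$, as claimed.

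For Case 2, assume $x_P<n/2$. Then $2x_P<n$, so $\lfloor 2x_P\rfloor\le n-1<n$ and $\min\{\lfloor 2x_P\rfloor,n\}=\lfloor 2x_P\rfloor$. Also $2x_P-n<0$, so $\lceil 2x_P-n\rceil\le 0$ and $\max\{\lceil 2x_P-n\rceil,0\}=0$. Averaging gives $\geo(P)=\tfrac{\lfloor 2x_P\rfloor}{2}$.

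I do not expect any real obstacles: once Proposition~\ref{optimal1} and Corollary~\ref{optimal2} are specialized to the whole state, the proof is just bookkeeping with floors, ceilings, and the sign of $2x_P-n$. The paper's standing assumption that $x_P$ is not an integer multiple of $0.5$ rules out the boundary case $x_P=n/2$, which is why only the two strict inequalities need to be considered.
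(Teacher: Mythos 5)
Your proposal is correct and follows essentially the same route as the paper: both specialize Proposition~\ref{optimal1} and Corollary~\ref{optimal2} to the whole state to get the best- and worst-case district counts, then simplify the resulting $\min$ and $\max$ using the sign of $2x_P-n$ and average. The only difference is that you write out the $\min$/$\max$ bookkeeping explicitly where the paper evaluates them implicitly within each case.
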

\begin{proof}
Without loss of generality, assume $P=A$.
In both cases, the party $A$ will win the most districts when it draws all districts, and will win the fewest when $B$ draws all the districts. Thus, by Propositions~\ref{optimal1} and  \ref{optimal2}, when $x_P > n/2$,

\[
\geo(A) =\frac{A(L_n) + A(L_0)}{2}
=\frac{n+\lceil x_A-x_B\rceil}{2}
=\frac{n+\lceil x_A-(n-x_A)\rceil}{2}
=\frac{\lceil 2(x_A)\rceil}{2}.
\]

Similarly, when $x_P < n/2$,

\[
\geo(A) =\frac{A(L_n) + A(L_0)}{2}
=\frac{\lfloor2x_A\rfloor+0}{2}
=\frac{\lfloor2x_A\rfloor}{2}.
\]
\end{proof}

\begin{proposition}\label{k-split}
For any $k$, $\geo_k(P)=\frac{P(L_k)+P(R_k)}{2}$. 
\end{proposition}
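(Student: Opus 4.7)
The plan is to unpack the definition of $\geo_k(P)$ and match it to the right-hand side by a direct algebraic rearrangement, with one small monotonicity step to justify which configurations are the best and worst cases.

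First I would expand the right-hand side using the defining identity $P(S_k)=P(S_k,P)+P(\overline{S_k},\overline{P})$. This gives
\[
\frac{P(L_k)+P(R_k)}{2}
=\frac{\bigl[P(L_k,P)+P(R_k,P)\bigr]+\bigl[P(L_k,\overline{P})+P(R_k,\overline{P})\bigr]}{2}.
\]
The first bracket is the number of districts $P$ wins if $P$ is allowed to draw lines on both sides of the $k$-split, and the second bracket is the number of districts $P$ wins if $\overline{P}$ draws on both sides. Since in the $k$-split geometric target every district is constrained to lie on one side of the $k$-split, these are candidates for the best and worst cases for $P$.

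Next I would verify that they really are the best and worst cases, i.e.\ that for any side $S_k$,
\[
P(S_k,\overline{P})\ \le\ P(S_k,P).
\]
Using Proposition~\ref{optimal1} and Corollary~\ref{optimal2}, this amounts to checking
\[
\max\{\lceil x_P(S_k)-x_{\overline{P}}(S_k)\rceil,0\}\ \le\ \min\{\lfloor 2x_P(S_k)\rfloor,\,|S_k|\}.
\]
I would split on whether $x_P(S_k)\ge |S_k|/2$ or not, using $x_P(S_k)+x_{\overline{P}}(S_k)=|S_k|$ to rewrite $x_P-x_{\overline{P}}=2x_P-|S_k|$. In the majority case the right side is $|S_k|$ and the left side is at most $|S_k|$ (since $2x_P\le 2|S_k|$); in the minority case the left side is $0$ and the right side is nonnegative.

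Once this monotonicity is established, the best case for $P$ among all assignments of sides is obtained by letting $P$ district both sides (so $P$ wins $P(L_k,P)+P(R_k,P)$), and the worst case by letting $\overline{P}$ district both sides (so $P$ wins $P(L_k,\overline{P})+P(R_k,\overline{P})$). Averaging these two quantities gives $\geo_k(P)$ by definition, which matches the expansion above and completes the proof. There is no real obstacle here; the only mild subtlety is being careful that the optimal-strategy results from Proposition~\ref{optimal1} and Corollary~\ref{optimal2} continue to apply side-by-side under the $k$-split constraint, which is immediate because in this section we have assumed no geometric constraints within each side.
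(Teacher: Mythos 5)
Your proof is correct and follows essentially the same route as the paper's: identify the best case as $P$ districting both sides and the worst case as $\overline{P}$ districting both sides, then regroup the four terms into $P(L_k)+P(R_k)$. The only difference is that you explicitly verify $P(S_k,\overline{P})\le P(S_k,P)$, a step the paper asserts without proof; this is a reasonable (if minor) addition.
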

\begin{proof}
The party $P$ will win the most districts when it draws  districts on both sides of the $k$-split, and will win the fewest when $\overline{P}$ draws districts on both sides of the $k$-split. Thus, for any $k$- split,
\begin{align*}
\geo_k(P)&=\frac{P(L_k,P)+P(R_k,P)+P(L_k,\overline{P})+P(R_k,\overline{P})}{2}\\
&=\frac{P(L_k,P)+P(R_k,\overline{P})+P(L_k,\overline{P})+P(R_k,P)}{2}\\
&=\frac{P(L_k)+P(R_k)}{2}.
\end{align*}
\end{proof}
 
\begin{proposition} \label{geogeok}
For all $k$, $|\geo(P)-\geo_{k}(P)|\leq\frac{1}{2}$.
\end{proposition}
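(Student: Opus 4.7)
The plan is to rewrite both $\geo(P)$ and $\geo_k(P)$ in a common floor/ceiling form and then invoke Proposition~\ref{less than 1} on the doubled quantities. The key observation is that for any side $S$, if we set $h(S) := P(S,P)+P(S,\overline{P})$, then combining Propositions~\ref{optimal1} and \ref{optimal2} (and using the convention that $x_P(S)$ is not a multiple of $1/2$) gives the clean formula
\[
h(S)=\begin{cases}\lceil 2x_P(S)\rceil & \text{if } x_P(S)>|S|/2,\\ \lfloor 2x_P(S)\rfloor & \text{if } x_P(S)<|S|/2,\end{cases}
\]
since in the majority case $h(S) = |S| + \lceil 2x_P(S) - |S|\rceil = \lceil 2x_P(S)\rceil$, and in the minority case $h(S)=\lfloor 2x_P(S)\rfloor+0$.

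Next I would rearrange the sum from Proposition~\ref{k-split}:
\[
2\geo_k(P)=P(L_k)+P(R_k)=\bigl[P(L_k,P)+P(L_k,\overline{P})\bigr]+\bigl[P(R_k,P)+P(R_k,\overline{P})\bigr]=h(L_k)+h(R_k),
\]
and note that the same calculation applied to the whole state (which is just the $L_n$ side) gives $2\geo(P)=h(\text{state})$, recovering Proposition~\ref{geotarg}. So the inequality $|\geo(P)-\geo_k(P)|\le 1/2$ is equivalent to $|h(\text{state})-h(L_k)-h(R_k)|\le 1$, which is exactly the kind of statement Proposition~\ref{less than 1} was designed for with $r=2x_P(L_k)$ and $s=2x_P(R_k)$.

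The remaining work is a four-way case split on which of $x_P(L_k)$, $x_P(R_k)$ is a majority. If $P$ holds a majority on both sides, then $x_P>n/2$ and all three quantities are ceilings, so part~\ref{i<1} of Proposition~\ref{less than 1} applies. If $P$ is in the minority on both sides, then $x_P<n/2$ and all three are floors, so part~\ref{iv<1} applies. If $P$ is in the majority on exactly one side (say $L_k$) and in the minority on the other, then $h(L_k)+h(R_k)=\lceil 2x_P(L_k)\rceil+\lfloor 2x_P(R_k)\rfloor$, while $h(\text{state})$ is either $\lceil 2x_P\rceil$ or $\lfloor 2x_P\rfloor$ depending on whether $x_P$ exceeds $n/2$; these are covered by parts~\ref{ii<1} and \ref{iii<1} respectively. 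Dividing the resulting $\le 1$ bound by $2$ finishes the proof.

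The only mild obstacle is bookkeeping in the mixed-majority case, since one must correctly match which variant of Proposition~\ref{less than 1} applies depending on the sign of $x_P-n/2$; otherwise the argument is essentially mechanical once the formula for $h(S)$ is written down.
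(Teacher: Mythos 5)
Your proposal is correct and follows essentially the same route as the paper's proof: both reduce $2\geo(P)$ and $2\geo_k(P)$ to floor/ceiling expressions in $2x_P$, $2x_P(L_k)$, and $2x_P(R_k)$ via Propositions~\ref{optimal1}, \ref{optimal2}, \ref{geotarg}, and \ref{k-split}, then split into the same four majority/minority cases and apply the corresponding part of Proposition~\ref{less than 1}. Your packaging of the computation into the helper quantity $h(S)$ is a tidy reorganization but not a different argument.
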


\begin{proof}
Without loss of generality, assume $P=A$.
There are 4 cases, depending whether $A$ has a majority of the support in the state, and on each side of the $k$-split.  Without loss of generality, we assume that if $A$ has a majority of support on one side of the $k$-split and a minority of support on the other side, that it has a majority on the left. We explain the first case in detail, and sketch the rest.

Case 1: Assume $A$ has a majority of support in the state, and on each side of the $k$ split, i.e. $x_A>\frac{n}{2}$, $x_A(L_k)>\frac{k}{2}$, and $x_A(R_k)>\frac{n-k}{2}$.  Then, using Proposition~\ref{k-split}, Proposition~\ref{optimal1} and Corollary~\ref{optimal2},
\begin{align*}
    \geo_k(A) &= \frac{A(L_k)+A(R_k)}{2}\\
    &=\frac{A(L_k,A)+ A(R_k,B)+A(L_k,B)+A(R_k,A)}{2}\\
    &=\frac{k + \lceil x_A(L_k)-x_B(L_k)\rceil +\lceil x_A(R_k)-x_B(R_k)\rceil + (n-k)}{2}\\
    &=\frac{k + \lceil2x_A(L_k)-k\rceil+\lceil2x_A(R_k)-(n-k)\rceil + (n-k)}{2}\\
    &=\frac{\lceil2x_A(L_k)\rceil+\lceil2x_A(R_k)\rceil}{2}.
\end{align*}
By Lemma ~\ref{geotarg},  $\geo(A)=\frac{\lceil 2(x_A)\rceil}{2}$.
Thus, 
\[|\geo(A)-\geo_{k}(A)|=\left|\frac{\lceil 2x_A\rceil}{2}-\frac{\lceil2x_A(L_k)\rceil+\lceil2x_A(R_k)\rceil}{2}\right|.\]

Since $x_A(L_k) + x_A(R_k) = x_A$,  Proposition~\ref{less than 1} Part~\ref{i<1} implies this quantity is at most $1/2$.\\
Case 2: Assume $x_A>\frac{n}{2}$, $x_A(L_k)>\frac{k}{2}$ and $x_A(R_k)<\frac{n-k}{2}$. Then 
\[
\geo_k(A)=\frac{\lceil 2x_A(L_k)\rceil+\lfloor2x_A(R_k)\rfloor}{2}
\hspace{.5in} \text{ and } \hspace{.5in}
\geo(A)=\frac{\lceil 2(x_A)\rceil}{2}.
\]
Case 3: Assume $x_A<\frac{n}{2}$, $x_A(L_k)>\frac{k}{2}$ and $x_A(R_k)<\frac{n-k}{2}.$  Then 
\[
\geo_k(A)=\frac{\lceil 2x_A(L_k)\rceil+\lfloor2x_A(R_k)\rfloor}{2}
\hspace{.5in} \text{ and } \hspace{.5in}
\geo(A)=\frac{\lfloor 2x_A\rfloor}{2}.
\]
Case 4: Assume  $x_A<\frac{n}{2}$, $x_A(L_k)<\frac{k}{2}$ and $x_A(R_k)<\frac{n-k}{2}.$   Then 
\[
\geo_k(A)=\frac{\lfloor 2x_A(L_k)\rfloor+\lfloor2x_A(R_k)\rfloor}{2}
\hspace{.5in} \text{ and } \hspace{.5in}
\geo(A)=\frac{\lfloor 2x_A\rfloor}{2}.
\]
In Cases 2, 3, and 4, the conclusion follows from Proposition~\ref{less than 1}, Parts~\ref{ii<1}, \ref{iii<1}, and \ref{iv<1}, respectively.
\end{proof}

 \subsection{Analyzing the fairness of the LRY Protocol}
 In this section, we prove that the number of districts a party wins under the LRY protocol is within two of its geometric target.  Proposition~\ref{k-split} states that the $k$-split geometric target for a party $P$  is the average of the number of districts won by $P$ under Options 1 and 2 in the protocol.  Thus in at least one of these options the number of districts won by $P$ is at least the $k$-split geometric target. This is what is referred to as the ``Good Choice Property'' in \cite{LRYprotocol} and \cite{LandauSuProtocol}. Thus if the protocol ends in outcomes 1, 2, or 3, the number of districts won by $P$ will be at least its $k$-split geometric target, and thus by Proposition~\ref{geogeok}, within 1/2 of the geometric target.  Thus we devote the rest of the subsection to analyzing Outcome 4, the coin flip scenario.
 
 Before we prove the theorem, we finish analyzing the coin flip scenario in Example~\ref{2gap}. Analyzing the 5- and 6-splits with Proposition~\ref{optimal1} and Corollary~\ref{optimal2}, we get 
 \begin{align*}
  A(R_5,A) = 4, \quad  & A(R_5,B) = 0, \quad  & A(L_5,A) = 3, \quad  & A(L_5,B) = 0,\\ 
 A(R_6,A) = 2, \quad  & A(R_6,B) = 0, \quad  & A(L_6,A) = 5, \quad  & A(L_6,B) = 0.
 \end{align*}
 Thus $A(R_5) = 4$, $A(L_5) = 3$, $A(R_6) = 2$, and $A(L_6) = 5$. Proposition~\ref{k-split} implies $\geo_5(A) = \geo_6(A) = 3.5$, and since $x_A=4.3$, by Proposition~\ref{geotarg}, $\geo(A) = \frac{1}{2}\lfloor 2 \cdot 4.3 \rfloor = 4$.

Note that in this example in one of the coin flip options, ($A(R_6)$), $A$ wins only 2 districts, while the geometric target is 4.  Thus the protocol gives $A$ two fewer districts than is fair as defined by the geometric target, and 1.5 fewer than the $k$-split geometric target. The goal of this subsection is to show that this is the farthest possible outcome from either target.

For the remainder of the section, assume we have a fixed sequence of $k$-splits in a given state. Before considering the coin flip scenario, we first prove some results on the possible differences in the number of districts won for consecutive $k$-splits.

\begin{proposition}\label{minup}
   Suppose $x_P(k) < .5$.  Then $0 \le P(L_k,P)-P(L_{k-1},P) \le 1$
\end{proposition}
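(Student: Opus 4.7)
The plan is to express both quantities using Proposition~\ref{optimal1}, which gives
\[
P(L_k,P) = \min\{\lfloor 2x_P(L_k)\rfloor,\, k\}, \qquad P(L_{k-1},P) = \min\{\lfloor 2x_P(L_{k-1})\rfloor,\, k-1\},
\]
and then exploit the identity $x_P(L_k) = x_P(L_{k-1}) + x_P(k)$ with the hypothesis $0 \le 2x_P(k) < 1$. First I would record the ``small shift'' observation that $\lfloor 2x_P(L_k)\rfloor - \lfloor 2x_P(L_{k-1})\rfloor \in \{0,1\}$, since adding a number in $[0,1)$ to a real shifts its floor by $0$ or $1$.

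Next I would split into cases based on whether $P$ has a majority of support on $L_{k-1}$ and on $L_k$. The convention that $x_P(L_j)$ is never an integer multiple of $1/2$ makes each inequality strict, so there are four cases. The crucial structural observation is that \emph{if} $x_P(L_{k-1}) < (k-1)/2$, then
\[
x_P(L_k) = x_P(L_{k-1}) + x_P(k) < \tfrac{k-1}{2} + \tfrac{1}{2} = \tfrac{k}{2},
\]
so the minority status is preserved. This rules out the ``minority-to-majority'' case, which is the one that could produce a jump larger than $1$.

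In the two remaining non-majority/non-minority cases the analysis is routine: if both are minorities, the difference equals $\lfloor 2x_P(L_k)\rfloor - \lfloor 2x_P(L_{k-1})\rfloor \in \{0,1\}$ by the small-shift observation; if both are majorities, the difference equals $k-(k-1)=1$. The only remaining case, where $P$ is a majority on $L_{k-1}$ but a minority on $L_k$, forces $P(L_{k-1},P)=k-1$ and $P(L_k,P)=\lfloor 2x_P(L_k)\rfloor$; here one shows $k-1 < 2x_P(L_k) < k$ using the strictness from the majority hypothesis on $L_{k-1}$ and the minority hypothesis on $L_k$, pinning $\lfloor 2x_P(L_k)\rfloor = k-1$ and giving difference $0$.

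The main obstacle is the last point: recognizing and then ruling out (via the structural observation) the minority-to-majority case, since a naive bound on $\lfloor 2x_P(L_k)\rfloor - \lfloor 2x_P(L_{k-1})\rfloor$ alone is not enough once the $\min$ with $k$ or $k-1$ becomes active. Managing the strict versus non-strict inequalities carefully—which is exactly what the no-integer-multiple-of-$1/2$ convention lets us do—is where the argument has to be precise.
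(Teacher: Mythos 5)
Your proof is correct, and all four cases check out (including the observation that $x_P(k)<.5$ forbids the minority-to-majority transition, and the pinning of $\lfloor 2x_P(L_k)\rfloor = k-1$ in the majority-to-minority case). However, the paper takes a different and considerably shorter route: it writes $P(L_k,P)=\min\{\lfloor 2x_P(L_{k-1})+2x_P(k)\rfloor,\,(k-1)+1\}$ and observes that \emph{each argument} of the $\min$ increases by at least $0$ and at most $1$ when passing from the $(k-1)$-split to the $k$-split (the floor by $0$ or $1$ because $0\le 2x_P(k)<1$, the cap by exactly $1$); since $\min$ is monotone in each coordinate, the $\min$ itself increases by between $0$ and $1$, with no case analysis at all. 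The minority-to-majority scenario you worked to rule out is simply absorbed by this componentwise bound. The trade-off: the paper's argument is a two-line inequality chain that adapts immediately to Proposition~\ref{majup} (where $x_P(k)>.5$ shifts both components by $1$ or $2$), while your case analysis makes explicit \emph{which} branch of the $\min$ is active in each regime and why no branch can jump by more than one, at the cost of four cases and careful bookkeeping of strict inequalities. Both are valid; yours is more labor-intensive but loses nothing in rigor.
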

\begin{proof}
Note that $P(L_k, P) = \min\{\lfloor2x_P(L_k)\rfloor, k\} = \min\{\lfloor2x_P(L_{k-1}) + 2x_P(k) \rfloor, k-1+1\}$.
Thus
\[P(L_k, P) = \min\{\lfloor2x_P(L_{k-1}) + 2x_P(k) \rfloor, k-1+1\}
\ge \min\{\lfloor2x_P(L_{k-1})  \rfloor, k-1\} = P(L_{k-1}, P),
\]
and
\[P(L_k, P) = \min\{\lfloor2x_P(L_{k-1}) + 2x_P(k) \rfloor, k-1+1\}
\le \min\{\lfloor2x_P(L_{k-1})  \rfloor, k-1\}+1 = P(L_{k-1}, P)+1,
\]
justifying the first and second inequalities, respectively.
\end{proof}

\begin{proposition}\label{majup}
   Suppose $x_P(k) > .5$.  Then $1 \le P(L_k,P)-P(L_{k-1},P) \le 2$
\end{proposition}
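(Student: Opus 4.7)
The plan is to mirror the argument used in Proposition~\ref{minup}, the only substantive change being that the assumption $x_P(k) > .5$ makes the jump in $\lfloor 2x_P(L_k)\rfloor$ land in a different interval. First I would apply Proposition~\ref{optimal1} to rewrite both $P(L_k,P) = \min\{\lfloor 2x_P(L_k)\rfloor,\ k\}$ and $P(L_{k-1},P) = \min\{\lfloor 2x_P(L_{k-1})\rfloor,\ k-1\}$, then expand $x_P(L_k) = x_P(L_{k-1}) + x_P(k)$.

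The key quantitative step is to use $x_A(k)+x_B(k)=1$ together with the hypothesis $x_P(k) > .5$ to conclude $0.5 < x_P(k) \le 1$, hence $1 < 2x_P(k) \le 2$. Standard properties of the floor then give
\[
\lfloor 2x_P(L_{k-1})\rfloor + 1 \ \le\ \lfloor 2x_P(L_{k-1}) + 2x_P(k)\rfloor \ \le\ \lfloor 2x_P(L_{k-1})\rfloor + 2.
\]
Combining this with the identity $\min\{a+c, b+c\} = \min\{a,b\}+c$ and the fact that $k = (k-1)+1$, I would derive the lower bound via
\[
P(L_k,P) \ \ge\ \min\{\lfloor 2x_P(L_{k-1})\rfloor + 1,\ (k-1)+1\} \ =\ P(L_{k-1},P) + 1,
\]
and the upper bound via
\[
P(L_k,P) \ \le\ \min\{\lfloor 2x_P(L_{k-1})\rfloor + 2,\ k\} \ \le\ \min\{\lfloor 2x_P(L_{k-1})\rfloor,\ k-1\} + 2 \ =\ P(L_{k-1},P) + 2,
\]
where the last inequality uses $k \le k+1 = (k-1)+2$.

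There is no real obstacle here; the only place one has to be careful is to confirm that the strict inequality $2x_P(k) > 1$ is enough to push the floor up by at least one, and that the non-strict inequality $2x_P(k) \le 2$ prevents the floor from jumping by more than two. Once the floor bounds are pinned down, the proof is essentially the same manipulation of nested minima as in Proposition~\ref{minup}.
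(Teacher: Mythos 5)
Your proposal is correct and follows essentially the same route as the paper: both rewrite $P(L_k,P)$ via Proposition~\ref{optimal1} as $\min\{\lfloor 2x_P(L_{k-1})+2x_P(k)\rfloor,\ (k-1)+1\}$ and compare it to $\min\{\lfloor 2x_P(L_{k-1})\rfloor,\ k-1\}$. You merely make explicit the floor estimates $\lfloor a\rfloor+1\le\lfloor a+2x_P(k)\rfloor\le\lfloor a\rfloor+2$ that the paper leaves implicit.
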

\begin{proof}
Note that $P(L_k, P) = \min\{\lfloor2x_P(L_k)\rfloor, k\} = \min\{\lfloor2x_P(L_{k-1}) + 2x_P(k) \rfloor, k-1+1\}$.
Thus
\[
P(L_k, P) = \min\{\lfloor2x_P(L_{k-1}) + 2x_P(k) \rfloor, k-1+1\}
\ge \min\{\lfloor2x_P(L_{k-1})  \rfloor, k-1\} +1 = P(L_{k-1}, P)+1,
\]
and
\[
P(L_k, P) = \min\{\lfloor2x_P(L_{k-1}) + 2x_P(k) \rfloor, k-1+1\}
\le \min\{\lfloor2x_P(L_{k-1})  \rfloor, k-1\}+2 = P(L_{k-1}, P)+2,
\]
justifying the first and second inequalities, respectively.
\end{proof}

\begin{proposition}\label{mindn}
   Suppose $x_P(k) < .5$.  Then $0 \le P(R_k,\overline{P})-P(R_{k-1},\overline{P}) \le 1$
\end{proposition}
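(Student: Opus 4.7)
The plan is to imitate the structure of Propositions~\ref{minup} and \ref{majup}, but using the formula from Corollary~\ref{optimal2} (rather than Proposition~\ref{optimal1}) because the side in question is districted by $\overline{P}$. First I would exploit that the $k$-splits are nested, so $R_k \subseteq R_{k-1}$ and the set difference $R_{k-1}\setminus R_k$ is precisely the slice between the $(k-1)$- and $k$-splits. This gives
\[
x_P(R_{k-1})=x_P(R_k)+x_P(k),\qquad x_{\overline{P}}(R_{k-1})=x_{\overline{P}}(R_k)+\bigl(1-x_P(k)\bigr),
\]
so, setting $a=x_P(R_k)-x_{\overline{P}}(R_k)$ and $b=x_P(R_{k-1})-x_{\overline{P}}(R_{k-1})$, a direct computation yields $b=a-(1-2x_P(k))$. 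The hypothesis $x_P(k)<.5$ then gives $b<a$ with $a-b\le 1$. By Corollary~\ref{optimal2}, $P(R_k,\overline{P})=\max\{\lceil a\rceil,0\}$ and $P(R_{k-1},\overline{P})=\max\{\lceil b\rceil,0\}$.

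The lower bound is immediate: $b<a$ implies $\lceil b\rceil\le\lceil a\rceil$, and the $\max$ with $0$ preserves this, so $P(R_k,\overline{P})-P(R_{k-1},\overline{P})\ge 0$. For the upper bound I would split on the signs of the two ceilings. When $\lceil a\rceil\le 0$ both $\max$'s equal $0$ and the difference is $0$. When both are positive, $a-b<1$ gives $\lceil a\rceil\le\lceil b\rceil+1$, so the difference is at most $1$. The only case needing slightly more care is $\lceil a\rceil>0$ but $\lceil b\rceil\le 0$: here $b\le 0$ together with $a\le b+1\le 1$ forces $\lceil a\rceil\le 1$, which is exactly what is needed. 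The remaining combination $\lceil a\rceil\le 0<\lceil b\rceil$ is impossible since $b<a$. I don't foresee any real obstacle; the ceiling/max case split is the only subtlety, and it is essentially the same bookkeeping that appears in Proposition~\ref{less than 1}.
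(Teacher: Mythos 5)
Your proof is correct, but it takes a genuinely different route from the paper. You argue directly from the formula in Corollary~\ref{optimal2}: you track the margin $a=x_P(R_k)-x_{\overline{P}}(R_k)$, show that passing from $R_k$ to $R_{k-1}$ shifts it down by exactly $1-2x_P(k)\in(0,1]$, and then handle the interaction of the ceiling with the $\max\{\cdot,0\}$ by a four-way sign analysis. The paper instead uses a duality argument: since $x_{\overline{P}}(k)>.5$, it applies Proposition~\ref{majup} to the \emph{opposing} party on the right side (with the roles of left and right interchanged) to get $1\le \overline{P}(R_{k-1},\overline{P})-\overline{P}(R_k,\overline{P})\le 2$, and then converts this to a statement about $P$ via the complementary count $P(R_k,\overline{P})=|R_k|-\overline{P}(R_k,\overline{P})$, noting that $|R_{k-1}|=|R_k|+1$. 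The paper's reduction buys brevity and reuses work already done (no new floor/ceiling manipulation is needed); your direct computation is self-contained and makes the mechanism more transparent, at the cost of the case split on the signs of $\lceil a\rceil$ and $\lceil b\rceil$. One small point of bookkeeping in your write-up: you establish $a-b\le 1$ (with equality possible when $x_P(k)=0$) but later invoke $a-b<1$; fortunately the step $\lceil a\rceil\le\lceil b\rceil+1$ only needs $a\le b+1$, so nothing breaks.
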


\begin{proof}
Without loss of generality, assume $P=A$.
Since $x_A(k) < .5$, $x_B(k) > .5$. Interchanging the roles of $A$ and $B$ and left and right sides (note $R_{k-1}$ has one more district in it that $R_k$), Proposition~\ref{majup} implies
\[1 \le B(R_{k-1},B) - B(R_k,B) \le 2.
\]
Since $A(R_{k-1},B) = n-k+1 - B(R_{k-1},B)$, and $A(R_{k},B) = n-k - B(R_{k},B)$,
\begin{align*}
A(R_k,B)-A(R_{k-1},B) 
&= n-k - B(R_{k},B) - (n-k+1 - B(R_{k-1},B))\\
&=B(R_{k-1},B) - B(R_{k},B) - 1.
\end{align*}
Thus $0 \le A(R_k,B)-A(R_{k-1},B) \le 1$.
\end{proof}

\begin{proposition}\label{majdn}
   Suppose $x_P(k) > .5$.  Then $-1 \le P(R_k,\overline{P})-P(R_{k-1},\overline{P}) \le 0$.
\end{proposition}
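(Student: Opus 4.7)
The plan is to mirror the proof of Proposition~\ref{mindn} almost verbatim, but with the roles of $P$ and $\overline{P}$ interchanged. The hypothesis $x_P(k) > .5$ implies $x_{\overline{P}}(k) < .5$, which is exactly the hypothesis of Proposition~\ref{minup}, but applied to $\overline{P}$ rather than $P$. Since both $\min$/$\max$ propositions are phrased for a generic party, I may freely apply Proposition~\ref{minup} to the party $\overline{P}$ on the left side of a $(k-1)$- vs.\ $k$-split.

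Concretely, I would first reduce (without loss of generality) to $P=A$, so that $x_B(k) < .5$. Then I would invoke Proposition~\ref{minup} with party $B$ in place of $P$, obtaining
\[
0 \le B(L_k,B)-B(L_{k-1},B) \le 1,
\]
and analogously on the right I will need the corresponding bound for $B$'s districts on $R_k$ vs.\ $R_{k-1}$. The cleanest route (the one used in the proof of Proposition~\ref{mindn}) is to apply Proposition~\ref{minup} in its ``right side'' incarnation; since $R_{k-1}$ has one district more than $R_k$, the inequality to establish is
\[
0 \le B(R_{k-1},B) - B(R_k,B) \le 1,
\]
which follows by the same floor/$\min$ manipulation used in Proposition~\ref{minup}, now with $x_B(k)<.5$ as the hypothesis.

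Next I convert from $B$-wins to $A$-wins on the right using the identities $A(R_k,B) = (n-k) - B(R_k,B)$ and $A(R_{k-1},B) = (n-k+1) - B(R_{k-1},B)$. Subtracting gives
\[
A(R_k,B) - A(R_{k-1},B) = \bigl(B(R_{k-1},B) - B(R_k,B)\bigr) - 1,
\]
so the bounds $0 \le B(R_{k-1},B) - B(R_k,B) \le 1$ translate directly into the desired $-1 \le A(R_k,B) - A(R_{k-1},B) \le 0$. Restoring the generic-$P$ notation completes the proof.

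I do not expect any real obstacle: the argument is a mechanical dualization of Proposition~\ref{mindn}, and all of the intermediate floor-inequality manipulations are already packaged in Propositions~\ref{minup} and \ref{majup}. The only minor care needed is to track the ``off-by-one'' that appears because $|R_{k-1}| = |R_k|+1$, which is precisely where the $-1$ (rather than $0$) in the lower bound comes from.
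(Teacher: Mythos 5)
Your proposal is correct and matches the paper's proof essentially verbatim: both apply Proposition~\ref{minup} to the opposing party $\overline{P}=B$ with the left and right sides interchanged to get $0 \le B(R_{k-1},B) - B(R_k,B) \le 1$, then use the complement identities $A(R_{k},B) = (n-k) - B(R_{k},B)$ and $A(R_{k-1},B) = (n-k+1) - B(R_{k-1},B)$ to convert to the stated bounds. The initial detour through $B(L_k,B)-B(L_{k-1},B)$ is unnecessary, but you correctly discard it in favor of the right-side version, which is exactly the paper's argument.
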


\begin{proof}
Without loss of generality, assume $P=A$.
Since $x_A(k) > .5$, $x_B(k) < .5$. Interchanging the roles of $A$ and $B$ and left and right sides (note $R_{k-1}$ has one more district in it that $R_k$), Proposition~\ref{minup} implies
\[0 \le B(R_{k-1},B) - B(R_k,B) \le 1.
\]
Since $A(R_{k-1},B) = n-k+1 - B(R_{k-1},B)$, and $A(R_{k},B) = n-k - B(R_{k},B)$,
\begin{align*}
A(R_k,B)-A(R_{k-1},B) 
&= n-k - B(R_{k},B) - (n-k+1 - B(R_{k-1},B))\\
&=B(R_{k-1},B) - B(R_{k},B) - 1.
\end{align*}
Thus $-1 \le A(R_k,B)-A(R_{k-1},B) \le 0$.
\end{proof}

\begin{lemma}\label{1change}
For $1 \le k \le n$, $P(L_{k-1}) \leq P(L_k) \leq P(L_{k-1}) +2.$
\end{lemma}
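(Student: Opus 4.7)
The plan is to decompose $P(L_k) - P(L_{k-1})$ into a sum of two differences, each of which is already controlled by one of the four propositions immediately preceding the lemma. Specifically, by definition
\[
P(L_k) - P(L_{k-1}) = \bigl[P(L_k,P) - P(L_{k-1},P)\bigr] + \bigl[P(R_k,\overline{P}) - P(R_{k-1},\overline{P})\bigr],
\]
so bounding each bracket separately will yield the desired inequality. The only subtlety is that the bounds on the two brackets depend on whether $x_P(k) < 0.5$ or $x_P(k) > 0.5$, so I would split into these two cases (recalling the standing convention that support is never exactly a multiple of $0.5$, so one of these must hold).

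In the case $x_P(k) < 0.5$, Proposition~\ref{minup} gives $0 \le P(L_k,P) - P(L_{k-1},P) \le 1$ and Proposition~\ref{mindn} gives $0 \le P(R_k,\overline{P}) - P(R_{k-1},\overline{P}) \le 1$; summing yields $0 \le P(L_k) - P(L_{k-1}) \le 2$. In the case $x_P(k) > 0.5$, Proposition~\ref{majup} gives $1 \le P(L_k,P) - P(L_{k-1},P) \le 2$ and Proposition~\ref{majdn} gives $-1 \le P(R_k,\overline{P}) - P(R_{k-1},\overline{P}) \le 0$; again the sum lies between $0$ and $2$. In both cases the conclusion follows.

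There is no real obstacle here — the four preparatory propositions were evidently designed precisely so that the tight bounds on the two pieces interlock: when the slab of support on one side of the split is a majority for $P$, $P$'s wins on the left grow by at least $1$ while $P$'s wins on the right (which $\overline{P}$ districts) can only drop by at most $1$, and the roles reverse in the other case. The reason both endpoints come out to $0$ and $2$ rather than something wider is exactly this compensation. So the proof is essentially a two-line case split once the decomposition is written down.
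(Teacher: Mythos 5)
Your proof is correct and follows essentially the same route as the paper: the paper also writes $P(L_k)-P(L_{k-1})$ as the sum of the differences $P(L_k,P)-P(L_{k-1},P)$ and $P(R_k,\overline{P})-P(R_{k-1},\overline{P})$, splits into the cases $x_P(k)<0.5$ and $x_P(k)>0.5$, and applies Propositions~\ref{minup}--\ref{majdn} exactly as you do.
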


\begin{proof}
Without loss of generality, assume $P=A$.
Suppose $x_A(k)>0.5$. Then, using Propositions~\ref{majup} and \ref{majdn},
\begin{align*}
A(L_k) &= A(L_k,A) + A(R_k,B)\\
&= A(L_k,A) + A(R_k,B) + (A(L_{k-1},A) - A(L_{k-1},A)) + (A(R_{k-1},B) - A(R_{k-1},B))\\
&=A(L_{k-1},A)+A(R_{k-1},B)+(A(L_k,A)-A(L_{k-1},A))+(A(R_k,B)-A(R_{k-1},B))\\
&\ge A(L_{k-1},A)+A(R_{k-1},B)+1-1\\
&=A(L_{k-1},A)+A(R_{k-1},B)\\
&= A(L_{k-1}).
\end{align*}

Also using Propositions~\ref{majup} and \ref{majdn},
\begin{align*}
A(L_k) &= A(L_k,A) + A(R_k,B)\\
&= A(L_k,A) + A(R_k,B) + (A(L_{k-1},A) - A(L_{k-1},A)) + (A(R_{k-1},B) - A(R_{k-1},B))\\
&=A(L_{k-1},A)+A(R_{k-1},B)+(A(L_k,A)-A(L_{k-1},A))+(A(R_k,B)-A(R_{k-1},B))\\
&\le A(L_{k-1},A)+A(R_{k-1},B)+2+0\\
&=A(L_{k-1},A)+A(R_{k-1},B)+2\\
&= A(L_{k-1})+2.
\end{align*}

Suppose $x_A(k)<0.5$. Then, using Propositions~\ref{minup} and \ref{mindn},
\begin{align*}
A(L_k) &= A(L_k,A) + A(R_k,B)\\
&= A(L_k,A) + A(R_k,B) + (A(L_{k-1},A) - A(L_{k-1},A)) + (A(R_{k-1},B) - A(R_{k-1},B))\\
&=A(L_{k-1},A)+A(R_{k-1},B)+(A(L_k,A)-A(L_{k-1},A))+(A(R_k,B)-A(R_{k-1},B))\\
&\ge A(L_{k-1},A)+A(R_{k-1},B)+0-0\\
&=A(L_{k-1},A)+A(R_{k-1},B)\\
&= A(L_{k-1}).
\end{align*}

Also, using Propositions~\ref{minup} and \ref{mindn},
\begin{align*}
A(L_k) &= A(L_k,A) + A(R_k,B)\\
&= A(L_k,A) + A(R_k,B) + (A(L_{k-1},A) - A(L_{k-1},A)) + (A(R_{k-1},B) - A(R_{k-1},B))\\
&=A(L_{k-1},A)+A(R_{k-1},B)+(A(L_k,A)-A(L_{k-1},A))+(A(R_k,B)-A(R_{k-1},B))\\
&\le A(L_{k-1},A)+A(R_{k-1},B)+1 + 1\\
&=A(L_{k-1},A)+A(R_{k-1},B)+2\\
&= A(L_{k-1})+2.
\end{align*}
\end{proof}

Interchanging the roles of the left and right sides in Lemma~\ref{1change}, we obtain the following corollary.

\begin{corollary}\label{1changecor}
 For $1 \le k \le n$, $P(R_{k}) \leq P(R_{k-1}) \leq P(R_{k}) +2.$
\end{corollary}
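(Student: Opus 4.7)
The plan is to derive Corollary~\ref{1changecor} directly from Lemma~\ref{1change} by invoking the complementarity identity $P(S_k)+\overline{P}(\overline{S}_{k})=n$ established at the start of the Optimal Strategies subsection. The key observation is that as $k$ increases, the left side $L_k$ grows while the right side $R_k$ shrinks, so a nondecreasing-in-$k$ statement about $P(L_k)$ should translate into a nonincreasing-in-$k$ statement about $P(R_k)$, with the bound of $2$ preserved.

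Concretely, I would first apply Lemma~\ref{1change} with the party $\overline{P}$ in place of $P$, which is legitimate because the lemma is stated for a generic party. This yields
\[
\overline{P}(L_{k-1}) \le \overline{P}(L_k) \le \overline{P}(L_{k-1}) + 2.
\]
Next, I would specialize the identity $P(S_k)+\overline{P}(\overline{S}_{k})=n$ to $S_k = R_k$, giving $\overline{P}(L_k) = n - P(R_k)$ and similarly $\overline{P}(L_{k-1}) = n - P(R_{k-1})$. Substituting these into the display above makes every term of the form $n - P(R_{\cdot})$, so the inequalities reverse direction under the sign flip. Collecting terms yields
\[
P(R_k) \le P(R_{k-1}) \le P(R_k) + 2,
\]
which is exactly the claim.

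Because the derivation is essentially a two-line symbolic manipulation, I do not anticipate any real obstacle. The only subtle point is tracking the direction of the inequalities under the negation: this sign flip precisely encodes the fact that $R_{k-1} \supsetneq R_k$, opposite to $L_{k-1} \subsetneq L_k$. This procedure is equivalent to what the paper describes as ``interchanging the roles of the left and right sides,'' which could alternatively be carried out by re-indexing the nested splits via $j = n-k$ and applying Lemma~\ref{1change} in the reversed coordinates; either route produces the stated bounds.
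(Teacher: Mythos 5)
Your proposal is correct and matches the paper's approach: the paper proves this corollary in one line by ``interchanging the roles of the left and right sides in Lemma~\ref{1change},'' and your derivation---applying Lemma~\ref{1change} to $\overline{P}$ and converting via the identity $P(R_k)+\overline{P}(L_k)=n$---is a precise and valid way of carrying out exactly that symmetry argument. The sign flip is handled correctly, so the bounds $P(R_k) \le P(R_{k-1}) \le P(R_k)+2$ follow as claimed.
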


We now analyze the coin-flip scenario. If the protocol ends with Outcome 4, using the $(k-1)$- and $k$-splits, then party $P$ must prefer to district one side for the $(k-1)$-split, and the other for the $k$-split. If $P(L_{k-1}) > P(R_{k-1})$ and $P(L_k) < P(R_k)$, then by Corollary~\ref{1changecor} and Proposition~\ref{1change},
\[P(R_k) \le P(R_{k-1}) < P(L_{k-1}) \le P(L_k) < P(R_k),
\]
a contradiction. Thus for the remainder of the section we assume for both parties, $P(L_{k-1}) < P(R_{k-1})$ and $P(L_k) > P(R_k)$.

In Proposition~\ref{atmost3}, we show that the largest difference between any two outcomes in the coin flip scenario is at most three wins.

\begin{proposition}\label{atmost3}
    In a coin flip scenario involving a $(k-1)$- and $k$-split, $P(R_{k-1})  - P(L_{k-1}) \le 3$ and $P(L_{k})  - P(R_{k}) \le 3$.
\end{proposition}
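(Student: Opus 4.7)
The plan is to just chain together the bounds from Lemma~\ref{1change} and Corollary~\ref{1changecor} with the strict inequalities that define the coin flip scenario. Recall that in the coin flip scenario, for each party $P$ we have established that $P(L_{k-1}) < P(R_{k-1})$ and $P(L_k) > P(R_k)$. Since all four quantities are integers, these strict inequalities upgrade to $P(R_{k-1}) \ge P(L_{k-1}) + 1$ and $P(L_k) \ge P(R_k) + 1$.

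For the first bound, I would chain the inequalities going ``down the right side, across by the coin flip, and up the left side.'' Specifically, Corollary~\ref{1changecor} gives $P(R_{k-1}) \le P(R_k) + 2$, the coin flip condition gives $P(R_k) \le P(L_k) - 1$, and Lemma~\ref{1change} gives $P(L_k) \le P(L_{k-1}) + 2$. Adding these together yields
\[
P(R_{k-1}) \le P(R_k) + 2 \le P(L_k) + 1 \le P(L_{k-1}) + 3,
\]
which is precisely $P(R_{k-1}) - P(L_{k-1}) \le 3$.

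For the second bound, I would run the symmetric chain in the other direction: Lemma~\ref{1change} gives $P(L_k) \le P(L_{k-1}) + 2$, the coin flip condition gives $P(L_{k-1}) \le P(R_{k-1}) - 1$, and Corollary~\ref{1changecor} gives $P(R_{k-1}) \le P(R_k) + 2$. Combining,
\[
P(L_k) \le P(L_{k-1}) + 2 \le P(R_{k-1}) + 1 \le P(R_k) + 3,
\]
so $P(L_k) - P(R_k) \le 3$.

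There is no real obstacle here; the only thing to watch is that the two lemma-level bounds govern the ``same side'' transitions between the $(k-1)$- and $k$-splits, while the coin flip hypothesis controls the ``same split'' comparison between left and right. Stringing one of each type together (in either order) costs at most $2 + 2 - 1 = 3$, which is exactly the claimed bound.
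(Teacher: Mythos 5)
Your proof is correct and takes essentially the same route as the paper: both chain the bound $P(L_k) \le P(L_{k-1})+2$ from Lemma~\ref{1change}, the bound $P(R_{k-1}) \le P(R_k)+2$ from Corollary~\ref{1changecor}, and the strict coin-flip inequality, using integrality to land on $3$. The only cosmetic difference is that you convert the strict inequality to a $-1$ immediately, while the paper carries the strict inequality through and invokes integrality at the end.
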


\begin{proof}
By the properties of the coin flip, Lemma~\ref{1change}, and Corollary~\ref{1changecor}
\begin{align*}
    P(R_{k-1}) - P(L_{k-1}) &\le P(R_{k-1}) - P(L_{k})+2\\
    &< P(R_{k-1}) - P(R_{k})+2\\
    &\le 2+2\\
    &=4
\end{align*}

Similarly,
\begin{align*}
    P(L_{k}) - P(R_{k}) &\le P(L_{k-1}) +2 - P(R_{k})\\
    &< P(R_{k-1}) - P(R_{k})+2\\
    &\le 2+2\\
    &=4
\end{align*}
Since $P(R_{k-1}) - P(L_{k-1})$ and $P(L_{k}) - P(R_{k})$ are each integers less than 4, they can be at most 3.
\end{proof}

We now prove the main theorem. Example~\ref{2gap} shows that both bounds in the theorem are tight.

\begin{theorem}\label{mainthm}
 Suppose the LRY protocol ends in a coin flip scenario involving a $(k-1)$- and $k$-split. Then for all $S \in \{L,R\}$ and $i \in \{k-1,k\}$,
 \begin{enumerate}
    \item $|\geo_i(P)-P(S_i)|\leq \frac{3}{2}$, and
    \item $|\geo(P)-P(S_i)|\leq 2$.
\end{enumerate}
\end{theorem}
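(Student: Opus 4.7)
The plan is to derive both parts of the theorem as essentially immediate consequences of the propositions that have already been proved, namely Propositions~\ref{k-split}, \ref{geogeok}, and \ref{atmost3}. The key observation I would make first is that by Proposition~\ref{k-split}, $\geo_i(P)$ equals the average of $P(L_i)$ and $P(R_i)$; therefore, for either side $S \in \{L,R\}$,
\[
|P(S_i) - \geo_i(P)| = \frac{|P(L_i) - P(R_i)|}{2}.
\]
This single identity reduces Part 1 to bounding the gap $|P(L_i) - P(R_i)|$ for each $i \in \{k-1, k\}$.

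Next I would invoke the characterization of the coin flip scenario established just before Proposition~\ref{atmost3}: whenever the protocol ends in Outcome 4, both parties must satisfy $P(L_{k-1}) < P(R_{k-1})$ and $P(L_k) > P(R_k)$. Proposition~\ref{atmost3} then gives $P(R_{k-1}) - P(L_{k-1}) \le 3$ and $P(L_k) - P(R_k) \le 3$, so $|P(L_i) - P(R_i)| \le 3$ for each relevant $i$. Substituting into the identity above yields Part 1 for every combination of $S \in \{L,R\}$ and $i \in \{k-1,k\}$.

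For Part 2, I would combine Part 1 with Proposition~\ref{geogeok} via the triangle inequality:
\[
|P(S_i) - \geo(P)| \le |P(S_i) - \geo_i(P)| + |\geo_i(P) - \geo(P)| \le \tfrac{3}{2} + \tfrac{1}{2} = 2.
\]
The substantive work---the careful bookkeeping on how $P(L_k)$ and $P(R_k)$ change with $k$, and the derivation of the bound $3$ on the coin flip gap---was already carried out in Propositions~\ref{minup}--\ref{majdn}, Lemma~\ref{1change}, and Proposition~\ref{atmost3}. Consequently, the present theorem is essentially an averaging identity plus a triangle inequality, and I expect no genuine obstacle beyond correctly assembling the previously established bounds.
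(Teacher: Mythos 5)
Your proposal is correct and follows essentially the same route as the paper's proof: Part 1 via the averaging identity from Proposition~\ref{k-split} combined with the bound of $3$ from Proposition~\ref{atmost3}, and Part 2 via the triangle inequality with Proposition~\ref{geogeok}. No gaps.
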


\begin{proof}
To prove Part 1, use Propositions~\ref{k-split} and \ref{atmost3}, and the fact that $S_i$ is $L_i$ or $R_i$:
\begin{align*}
\left|P(S_i) -\geo_i(P)\right| &=  \left|P(S_i) -\frac{P(L_i) + P(R_i)}{2}\right|\\
&=  \left|\frac{P(L_i) - P(R_i)}{2}\right|\\
&\le \frac{3}{2}.
\end{align*}

For Part 2, by the triangle inequality and Proposition~\ref{geogeok},
\begin{align*}
    |\geo(P)-P(S_i)|
    &=|\geo(P)+\geo_i(P)-\geo_i(P)-P(S_i)|\\
    &\leq |\geo_i(P)-P(S_i)|+|\geo(P)-\geo_i(P)|\\
    &\leq \frac{3}{2} + \frac{1}{2} = 2.
\end{align*}
\end{proof}

\section{The LRY Protocol with Geometric Constraints}
We now show that given some geometric constraints on the district shape, it is possible under the LRY protocol for a party to win a number of districts that is arbitrarily far from their geometric target. 

First we define our geometric constraints. Our state is an $m \times m$ square grid, where each square represents an indivisible area of population. Denote by $d \in \mathbb{N}$ the number of squares that constitutes the population of one district. For $1 \le i,j \le m$, let $a_{i,j}$ denote the fraction of support for party $A$ in the square in the $i$th row and $j$th column. By convention, we draw the first row at the top, and first column at the left, and when context is clear, we will abuse notation and use $a_{i,j}$ to represent the square itself.

Our geometric constraints on districts are that they must be contiguous and compact. For contiguity, we require a district must be simply connected, and for compactness, we require that each district fits inside a $z$ by $z$ square, where $z=\lfloor 2\sqrt{d}\rfloor$.

\begin{example}\label{geodelta}
Let $\Delta \in \mathbb{N}$, $m=20\Delta$ and $d=100$, so there are $400\Delta^2$ squares and $4\Delta^2$ districts, each of which has 100 unit squares and must fit in a $20 \times 20$ square. Let $a_{i,j} = 1$ if it meets one of the following conditions
\begin{enumerate}
    \item $i\equiv 1, 2, 3, 4$, or $5 \bmod 20$ and $1\le j \le 10$,
    \item $i\equiv 6 \bmod 20$ and $j=1$,
\end{enumerate}
and let $a_{i,j} = 0$ otherwise. See Figure~\ref{constraintdist} for a partial drawing of the state.
\end{example}

\begin{figure}
\begin{center}
\begin{tikzpicture}[scale=.18]
\draw[fill=red!40] (0,20) -- (10,20) -- (10,15) -- (1,15) -- (1,14) -- (0,14) -- cycle;
\draw (0,20) -- (20,20) -- (20,0) -- (0,0) -- cycle;
\draw[ultra thick] (0,20) -- (5,20) -- (5,0) -- (0,0) -- cycle;
\node at (-5,10) {$L_1$};

\draw (20,20) -- (40,20) -- (40,0) -- (20,0) -- cycle;

\node at (45,10) {$\ldots$};

\draw[fill=red!40] (0,0) -- (10,0) -- (10,-5) -- (1,-5) -- (1,-6) -- (0,-6) -- cycle;
\draw (0,0) -- (20,0) -- (20,-20) -- (0,-20) -- cycle;
\draw[ultra thick] (0,0) -- (5,0) -- (5,-20) -- (0,-20) -- cycle;
\node at (-5,-10) {$L_2 \setminus L_1$};

\draw (20,0) -- (40,0) -- (40,-20) -- (20,-20) -- cycle;

\node at (45,-10) {$\ldots$};

\node at (10,-25) {$\ldots$};
\node at (30,-25) {$\ldots$};

\draw[fill=red!40] (0,-30) -- (10,-30) -- (10,-35) -- (1,-35) -- (1,-36) -- (0,-36) -- cycle;
\draw (0,-30) -- (20,-30) -- (20,-50) -- (0,-50) -- cycle;
\draw[ultra thick] (0,-30) -- (10,-30) -- (10,-40) -- (0,-40) -- cycle;
\node at (-5,-35) {$L_\Delta \setminus L_{\Delta - 1}$};

\draw (20,-30) -- (40,-30) -- (40,-50) -- (20,-50) -- cycle;

\node at (45,-40) {$\ldots$};

\draw (0,20) -- (70,20) -- (70,-50) -- (0,-50) -- cycle;

\draw (50,20) -- (70,20) -- (70,0) -- (50,0) -- cycle;

\draw (50,0) -- (70,0) -- (70,-20) -- (50,-20) -- cycle;

\draw (50,-30) -- (70,-30) -- (70,-50) -- (50,-50) -- cycle;

\node at (60,-25) {$\ldots$};

\end{tikzpicture}
\end{center}
\caption{An illustration of part of the state in Example~\ref{geodelta}.  Each large square corresponds to a $20 \times 20$ piece of the grid.  The shaded areas correspond to the groups of 51 squares where Party $A$ has support 1 in each small square.  The support for Party $A$ in the rest of the state is 0. The areas $L_i \setminus L_{i-1}$ for $i = 1$, $2$, and $\Delta$ are outlined with thick lines.}\label{constraintdist}
\end{figure}
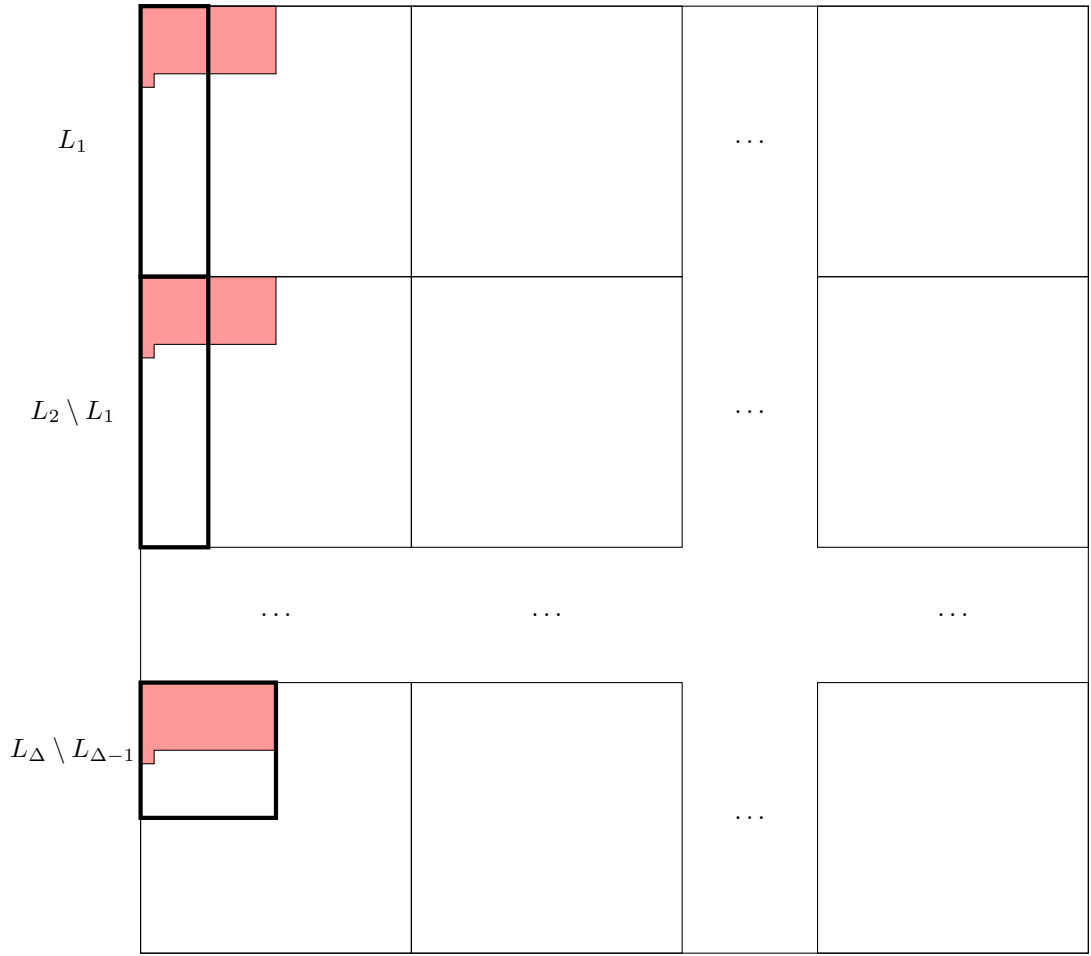

The total support for Party $A$ in Example~\ref{geodelta} is $51\Delta$. Since each district contains 100 squares, the support for $A$ in each square is either 1 or 0, and the squares are indivisible, the best possible outcome for Party $A$ is to win $\Delta$ districts.  This is in fact possible:  For $1 \le \ell \le \Delta$, if a district contains all squares with nonzero support for party $A$ in rows $1+ 20(\ell-1)$ through $20\ell$ then the support of $A$ in that district is 51.  Thus by drawing $\Delta$ such districts, and dividing the rest of the state arbitrarily, $A$ can win $\Delta$ districts.  It is also possible to draw districts so that $A$ will win none, and thus the geometric target for $A$ is $\Delta/2$. 

The key observation about Example~\ref{geodelta} is that $A$ can only win a district if that district contains all of the 1's in 20 consecutive rows in the first 10 columns. We now discuss the result of the LRY protocol on Example~\ref{geodelta}, where the splits will be chosen to break up these groups. For $0 \le k \le 4\Delta^2$, we specify the nested $k$-splits by describing  for $1 \le k \le 4\Delta^2$ the area that is in $L_k \setminus L_{k-1}$, i.e. the area in $L_k$ that is not in $L_{k-1}$. It is important to notice that these are not vertical $k$-splits like the ones in Figure 1.  For $1 \le k \le \Delta-1$, let  
\[L_k \setminus L_{k-1} = \{a_{i,j} : 1+20(k-1) \le i \le 20k, 1 \le j \le 5\},
\]
and let 
\[
L_\Delta\setminus L_{\Delta -1} = \{a_{i,j} : 1+20(\Delta-1) \le i \le 10+20(\Delta-1), 1 \le j \le 10\}.
\]
The key observation is that for $1 \le k \le \Delta-1$, $A(L_k)= 0$ and $A(R_k) = \Delta - k>0$, and $A(L_\Delta) = 1$ and $A(R_\Delta) = 0$.  Thus, regardless of the structure of the rest of the $k$-splits, if $k > \Delta$, $A(L_k) \ge 1$ and $A(R_k) = 0$, so we do not specify these splits explicitly. Note that for all of these splits, $B(S_k) = 4\Delta^2 - A(\overline{S}_k)$.

With these $k$-splits, for $0 \le k \le \Delta-1$, both parties prefer to district $R_k$, while for $\Delta \le k \le 4\Delta^2$, both parties prefer to district $L_k$. Thus the LRY protocol returns a coin flip scenario with the $(\Delta-1)$- and $\Delta$-splits. Note $A(L_{\Delta-1}) = 0$, $A(R_{\Delta-1}) = 1$, $A(L_{\Delta}) = 1$, and $A(R_{\Delta}) = 0$.  Thus two of the coin flip options are $\Delta/2$ less than $\geo(A)$.

Example~\ref{geodelta} implies the following theorem.

\begin{theorem}\label{arbitrary}
Under the LRY protocol with geometric constraints, the number of districts won by a party can be arbitrarily far from the geometric target.
\end{theorem}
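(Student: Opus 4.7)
The plan is to package Example~\ref{geodelta} into a family of instances parameterized by $\Delta \in \mathbb{N}$ and show that two of the four coin flip outcomes leave Party $A$ with $0$ wins while $\geo(A) = \Delta/2$, giving an unbounded gap. Three things need verification: (a) $\geo(A) = \Delta/2$; (b) the specified nested splits trigger Outcome~4 at the $(\Delta-1)$- and $\Delta$-splits; and (c) the number of wins $A$ realizes in each coin flip option.

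For (a), the best-case outcome for $A$ is $\Delta$: gather each band's $51$ ones inside the top-left $10 \times 10$ block of the band (which fits in a $20 \times 20$ box and contains all the band's ones), and tile the remainder of the state with $10 \times 10$ filler districts, so $A$ wins $\Delta$. The worst-case outcome is $0$: tile the state entirely by $20 \times 5$ vertical rectangles, each of which contains at most $26$ ones of $A$, so no district is an $A$-win. Averaging yields $\geo(A) = \Delta/2$.

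For (b), the main technical step is the claim that $A(L_k) = 0$ for every $k \le \Delta - 1$. The region $L_k$ sits inside the vertical strip $\{a_{i,j} : 1 \le j \le 5\}$ of width $5$, and any simply connected region of exactly $100$ squares lying in a width-$5$ strip and fitting in a $20 \times 20$ box must be an axis-aligned $20 \times 5$ rectangle, since $100/5 = 20$ rows are forced and that saturates the compactness box. Every such rectangle spans $20$ consecutive rows, and in columns $1$--$5$ every $20$-row window contains the five ``top'' rows of exactly one band (contributing $25$ ones) plus the extra row-$6$ one in column $1$ of the same band, giving exactly $26$ ones; since $26 < 51$, no district in $L_k$ is an $A$-win, so $A(L_k, A) = A(L_k, B) = 0$. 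A parallel count on the other side yields $A(R_k, A) = \Delta - k$ (each of the bands $k+1, \ldots, \Delta$ is gathered into a single $10 \times 10$ winning district on the right, and the $20 \times 20$ compactness box forbids combining two bands because their ones are $20$ rows apart), together with $A(L_\Delta, A) = 1$ and $A(R_\Delta, A) = 0$. These values give $A(L_k) < A(R_k)$ for $k < \Delta$ and $A(L_k) > A(R_k)$ for $k \ge \Delta$; the same inequalities hold for $B$ by symmetry, so both parties switch preferences between the $(\Delta-1)$- and $\Delta$-splits, producing Outcome~4.

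For (c), compute the four coin flip options from the values above: the options ``$A$ districts $L_{\Delta-1}$'' and ``$A$ districts $R_\Delta$'' each give $A$ exactly $0$ wins, so $|\geo(A) - A(S_i)| = \Delta/2$ is realizable by the LRY protocol, and this is unbounded as $\Delta \to \infty$. The main obstacle is the compactness lemma fixing districts in $L_k$ to be $20 \times 5$ rectangles with $26$ ones each; everything downstream is direct bookkeeping against the two strategies described in Example~\ref{geodelta}.
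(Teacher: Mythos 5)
Your proposal is correct and follows essentially the same route as the paper: the paper's proof of Theorem~\ref{arbitrary} is exactly the analysis of Example~\ref{geodelta}, and you reproduce it, filling in the supporting counts (the forced $20\times 5$ shape of districts inside $L_k$, the $26$-versus-$51$ comparison, and the explicit worst-case tiling) that the paper only asserts. One phrase is slightly loose --- a $20$-row window in columns $1$--$5$ need not take its five residue-$1$-through-$5$ rows and its residue-$6$ row from a single band --- but since such a window always contains exactly five rows with $5$ ones each and one row with $1$ one, the total of $26$ holds regardless, so the argument is unaffected.
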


We note that Example~\ref{geodelta} relies on a highly  structured pattern for a party's support, and $k$-splits that divide that pattern in a specific way.  Landau and Su \cite{LandauSuProtocol} note that by doing the LRY protocol repeatedly with many different patterns of $k$-splits, one is unlikely to have one party at a persistent disadvantage relative to their geometric target.

\section{Acknowledgements}
This research was conducted in Summer 2018  as a part of the Summer Undergraduate Applied Mathematics Institute at Carnegie Mellon University, funded by NSA grant number H98230-18-1-0043.


\begin{thebibliography}{9}
\bibitem{Dunchinmetric} M. Duchin. Gerrymandering metrics: How to measure? What's the baseline? arXiv 1801.02064,
January 2018.

\bibitem{2017arXiv170901596H} G. Herschlag, R. Ravier, and J. C. Mattingly. Evaluating partisan gerrymandering in Wisconsin.
arXiv 1709.01596, September 2017.

\bibitem{LRYprotocol} Z. Landau, O. Reid, and I. Yershov. A fair division solution to the problem of redistricting.
\textit{Social Choice and Welfare}, 32(3): 479--492, 2009.

\bibitem{LandauSuProtocol} Z. Landau and F. Su. Fair division and redistricting. arXiv 1402.0862, February
2014.

\bibitem{PegdenSC} W. Pegden. Pennsylvania's congressional districting is an outlier: Expert report. November 2017. \texttt{brennancenter.org/sites/default/files/legal-work/LW\_v\_PA\_Expert\_Report\_WesleyPegden\_11.17.17.pdf}.

\bibitem{Partisandistrictingprotocol} W. Pegden, A. Procaccia, and D. Yu. A partisan districting protocol with provably
nonpartisan outcomes. arXiv 1710.08781, October 2017.

\bibitem{diffmetrics} G. S. Warrington. A comparison of gerrymandering metrics. arXiv 1805.12572, May 2018.


\end{thebibliography}

\end{document}